\newtheorem{theorem}{Theorem}[section]
\newtheorem*{theorem*}{Theorem}
\newtheorem{lemma}[theorem]{Lemma}
\theoremstyle{definition}
\newtheorem{remark}[theorem]{Remark}
\newcommand{\cA}{\mathcal{A}}
\newcommand{\ddt}{\frac{d}{dt}}
\newcommand{\norm}[1]{\left\lvert{#1}\right\rvert}
\begin{document}

\title{How close is too close for singular mean curvature flows?}

\author{J.M. Daniels-Holgate}

\author{Or Hershkovits}

\date{\today}

\maketitle

\begin{abstract}
Suppose $(M^i_t)_{t\in [0,T)}$, $i=1,2$, are two mean curvature flows in $\mathbb{R}^{n+1}$ encountering a multiplicity one compact singularity at time $T$, in such a manner that for every $k$, the Hausdorff distance between the two flows, $d_H$, satisfies  $d_{H}(M^1_t,M^2_t)/(T-t)^k \rightarrow 0$. We demonstrate that $M^1_t=M^2_t$ for every $t$. This generalizes a result of Martin-Hagemayer and Sesum, who proved the case where $M^1_t$ is itself a self-similarly shrinking flow.   
\end{abstract}

\section{Introduction}
Singularity formation is a central area of study in Geometric Analysis. An important consequence of Huisken's monotoncity formula for mean curvature flow (MCF) is that a tangent flow at a singular point in an MCF is given by a self-shrinking mean curvature flow,
\begin{align*}
    \Sigma_t = \sqrt{-t}\Sigma_{-1}, t\in (-\infty, 0).
\end{align*} 

It was shown by Schulze, \cite{Schulze_cpt}, that when a tangent flow is given by a  multiplicity one compact self-shrinker, the tangent flow is in fact unique.  Entwined with the question of uniqueness of tangents is the question of minimal rate of convergence to the tangent (cf.\cite{AA,Simon,CM_loj}). More generally, one can ask \textit{at which rates can the flow converge to the self-shrinker?}

{ For the case of spherical singularities, the minimal and maximal rates of convergence were obtained by Sesum  \cite{Sesum} and Strehlke \cite{Stre}, the latter showing that a solution to the MCF converging super polynomially to the family of shrinking spheres must itself be a family of shrinking spheres. Strehlke's result was recently generalized by  Martin-Hagemayer and Sesum,\cite{MartinHagemayerSesum} (following a similar result for Ricci flow by Kotschwar \cite{Ks}) to apply to a flow converging super polynomially to any self shrinking \textit{compact} singularity. Importantly, obtaining a polynomial bound on the decay allows one to perform secondary blow-up arguments resulting in a non zero solution to the linearized equation. 
}

{ Note that if $M^1_t$ and $M^2_t$ are two distinct MCFs with the same compact tangent, \cite{MartinHagemayerSesum,Stre} give no lower bound as to how close they might be, unless one of them is the shrinking flow  itself. The purpose of this note is to provide such sharp lower bound.
}
\begin{theorem}\label{main_theorem}
    Let $(M^i_t)_{t\in [0,T)}$  ($i=1,2$) be two compact mean curvature flows in $\mathbb{R}^{n+1}$, encountering a multiplicity one compact singularity at $(x_0,T)$, such that for every $k\in \mathbb{N}$
    \begin{equation}\label{dist_ass}
    \lim_{t\rightarrow T}\frac{d_{H}\left(M^1_t,M^2_t\right)}{(T-t)^k}=0.
    \end{equation}  
    Then $M^1_t=M^2_t$ for every $t\in [0,T)$.
    \end{theorem}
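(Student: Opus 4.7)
The plan is to translate the Hausdorff closeness into super-exponential decay of the difference of the rescaled graph functions, and then to run a Carleman-type unique continuation argument on the closed manifold $\Sigma$ to force this difference to vanish. I would begin by parabolically rescaling both flows around the singular point: set $\tau = -\log(T-t)$ and $\widetilde{M}^i_\tau = (T-t)^{-1/2}(M^i_t - x_0)$. By Schulze's uniqueness theorem \cite{Schulze_cpt}, both $\widetilde{M}^i_\tau$ converge as $\tau\to\infty$ to the same compact self-shrinker $\Sigma$. For $\tau$ large, each $\widetilde{M}^i_\tau$ can be written as a normal graph over $\Sigma$ of a smooth function $u^i_\tau$ with $\|u^i_\tau\|_{C^2(\Sigma)}\to 0$, and assumption~\eqref{dist_ass} translates into $e^{k\tau}\|u^1_\tau - u^2_\tau\|_{C^0(\Sigma)}\to 0$ for every $k\in\mathbb{N}$. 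Standard interior parabolic estimates applied to the rescaled MCF equation on unit-length $\tau$-intervals then upgrade this $C^0$ super-exponential decay to super-exponential decay in every $C^m$ norm.

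Setting $w_\tau = u^1_\tau - u^2_\tau$ and subtracting the graphical rescaled MCF equations satisfied by $u^1$ and $u^2$, one checks that $w$ satisfies a linear parabolic equation of the form
\begin{equation*}
\partial_\tau w \;=\; L_\Sigma w \;+\; a^{ij}(\tau,x)\,\nabla^2_{ij} w \;+\; b^i(\tau,x)\,\nabla_i w \;+\; V(\tau,x)\,w,
\end{equation*}
where $L_\Sigma = \Delta_\Sigma - \tfrac12 \ip{x^\top}{\nabla(\cdot)} + |A|^2 + \tfrac12$ is the Ornstein--Uhlenbeck-type operator linearizing the rescaled MCF at $\Sigma$, and the coefficients $a^{ij}, b^i, V$ are smooth functions of $u^i$ and their first two derivatives, and therefore decay to zero as $\tau\to\infty$.

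The main step, and the principal obstacle, is to conclude from this super-exponential decay of $w$ that $w\equiv 0$ for all large $\tau$. I would prove this via a Carleman estimate / parabolic backward-uniqueness argument in the spirit of Escauriaza--Seregin--\v{S}ver\'ak, adapted to this geometric setting as in Kotschwar's work on Ricci flow \cite{Ks} and Martin-Hagemayer--Sesum's work on MCF \cite{MartinHagemayerSesum}: a solution of a linear parabolic equation of the above form on a closed manifold, decaying faster than every exponential as $\tau\to\infty$, must vanish identically for $\tau$ sufficiently large. The key technical points to verify are (i) that the boundedness (in fact, decay) of the coefficients $a^{ij}, b^i, V$ lets their contributions be absorbed by the main Carleman term, and (ii) that the compactness of $\Sigma$ removes the need for any spatial cutoff and yields a clean global estimate without boundary error. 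Once $w_\tau\equiv 0$ for $\tau$ large, the two rescaled flows coincide there, so $M^1_t = M^2_t$ on some interval $(t_0, T)$; classical backward uniqueness of smooth compact MCF then extends the equality to all $t\in [0,T)$.
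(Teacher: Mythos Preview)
Your scaffolding---rescale, conclude both rescaled flows converge to the same compact shrinker $\Sigma$, reduce to a unique-continuation statement for a super-exponentially decaying solution of a linear parabolic equation, then invoke backward uniqueness for smooth MCF---matches the paper. The substantive divergence is \emph{where} you linearize: you write both $\widetilde M^i_\tau$ as graphs over the fixed shrinker $\Sigma$ and subtract, whereas the paper writes $N^2_\tau$ as a normal graph over the \emph{moving} flow $N^1_\tau$.

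This is not a cosmetic choice. When you subtract the two graphical RMCF equations over $\Sigma$, the difference $w=u^1-u^2$ acquires a genuine second-order perturbation $a^{ij}(\tau,x)\nabla^2_{ij}w$, with $a^{ij}$ built from $u^1,u^2,\nabla u^1,\nabla u^2$ and hence decaying only at the (a priori merely polynomial-in-$\tau$) rate at which the $u^i$ converge to zero. The Carleman/frequency arguments you invoke are designed for errors of the form $|Pw|\le C(|w|+|\nabla w|)$; a free $|\nabla^2 w|$ on the right cannot simply be ``absorbed by the main Carleman term'' via smallness, because the positive commutator term in such estimates does not control $\|\nabla^2 w\|$ with a coefficient that beats the growing weight. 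The paper flags exactly this obstruction in Remark~\ref{why_not_lin_over_shrinker_both}, and the references you cite (\cite{Ks}, \cite{MartinHagemayerSesum}) do not handle second-order error terms either---they analyse the shrinker quantity $\phi$ of a single flow, where no such term appears.

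The paper's fix is to linearize over $N^1_\tau$ instead. Lemma~\ref{app_lin_RMCF} then yields an error that is genuinely first order, $|(\partial_\tau-L)u|\le C(\tau)(|u|+|\nabla u|)$ with $C\in L^1\cap L^2$, at the cost of a time-dependent background. That cost is paid via the \L ojasiewicz inequality (Lemmas~\ref{phi_int}--\ref{non_shrinker_est}), which gives $L^1$-in-$\tau$ control of the background drift, after which a Rayleigh-quotient/frequency argument (Theorem~\ref{L_with_errors}) goes through. If you want to keep your route, you would have to either prove a Carleman estimate that genuinely tolerates a decaying second-order perturbation, or absorb $a^{ij}$ into the principal symbol and rerun the estimate for a $\tau$-varying operator; neither step is standard, and your proposal does not carry either one out.
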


{
Note that one recovers the results of Martin-Hagemayer--Sesum by setting $M^1_t=\sqrt{-t}\Sigma$ for a compact self-shrinker $\Sigma$. In essence, \cite{MartinHagemayerSesum} shows a flow is linearly distinguishable from its tangent (as long as it's not self shrinking), whilst Theorem \ref{main_theorem} means that any two such distinct flows are linearly distinguishable from one another.}

{
While both our proof and \cite{MartinHagemayerSesum} are based on some ``frequency function'' argument, our proof is quite different from the one therein, as we now explain.

\bigskip

Considering the rescaled hypersurfaces
\begin{equation}
N^i_\tau:=e^{\tau/2}\left(M^{i}_{T-e^{-\tau}}-x_0\right),
\end{equation} 
the $(N^i_{\tau})_{\tau\in [-\log(T),\infty)}$ evolve by the so called rescaled mean curvature flow (RMCF):
\begin{equation}
\partial_{\tau} x =\vec{H}+\frac{x^{\perp}}{2}=: \langle \phi, \nu \rangle \nu.
\end{equation}
Under this rescaling (and time re-parametrization), the assumption of Theorem \ref{main_theorem} becomes
\begin{equation}\label{dist_ass_res}
    \lim_{\tau \rightarrow \infty }e^{k\tau}d_{H}\left(N^1_{\tau},N^2_{\tau}\right)=0,\qquad \forall k\in \mathbb{N}.
\end{equation}  

}
The argument in \cite{MartinHagemayerSesum} relies on analysis of the decay of $\phi$ (as $\phi\equiv 0$ precisely on  self-shrinkers), by means of analysing the quantity 
\begin{equation}\label{tilde_I_def}
\tilde{I}(\tau)=\int_{N^2_{\tau}} |\phi|^2d\tilde{\mu}_{\tau},
\end{equation}
and its logarithmic derivative (the so called ``Dirichlet--Einstein quotient''). Here
\begin{equation}\label{Gauss_measure_def}
d\tilde{\mu}_{\tau}=\exp\left(-\frac{|x|^2}{4}\right) d\mu_{\tau}.
\end{equation}
Note that while $\phi$ satisfies the nice linear PDE over $N^2_{\tau}$
\begin{equation}\label{phi_ev_and_L_def}
\partial_{\tau}\phi=L\phi ,\qquad \textrm{where}\qquad  L\phi: = \Delta \phi-\frac{1}{2}\langle x,\nabla \phi \rangle +\left(|A|^2+\frac{1}{2}\right)\phi,
\end{equation}
and while $\tilde{I}(\tau)$ gives a good measure for how non self-shrinking $N^2_{\tau}$ is, integrals involving $\phi$ are intrinsically inadequate to measuring differences between two non-shrinking flows (see also Remark \ref{why_not_lin_over_shrinker_both}). We thus need to take a different approach if we wish to allow $N^1_{\tau}$ to be non-shrinking.

\bigskip

Instead, as $N^i_{\tau}$, $i=1,2$, have the same compact singularity model, we may write $N^2_{\tau}$ as a normal graph of a function $u:N^1_{\tau}\to\mathbb{R}$ with a small norm, for $\tau$ sufficiently large. Similarly to $\phi$, we show that the height function $u$ satisfies a PDE over $N^1_{\tau}$ of the form
\begin{align}\label{Lin_with_err}
        \partial_{\tau} u = Lu+E(u),
\end{align}
where $L$ is as in \eqref{phi_ev_and_L_def} and $E$ is an error term, which by \eqref{dist_ass_res} can be shown to satisfy
\begin{equation}\label{err_est_intro}
|E(u)(x)| \leq e^{-\tau}(|u(x)|+|\nabla u(x)|).
\end{equation}

Our proof thus relies on analysing the smallness of $u$ rather than $\phi$. Instead of the quantity $\tilde{I}$ from \eqref{tilde_I_def} and its logarithmic derivative, we consider the energy given by the $L^2$ norm of the height,
    \begin{align*}
        I(\tau):=\int_{N_{\tau}^1} u^2 d\tilde{\mu}_t
    \end{align*} and set the parabolic frequency as
    \begin{align*}
        U(\tau):=\frac{\int_{N_{\tau}^1} uLu  d\tilde{\mu}_{\tau}}{\int_{N_{\tau}^1} u^2  d\tilde{\mu}_{\tau}}.
    \end{align*}

Note that $U$ is the Reyleigh quotient for the operator $L$. As is common in frequency based argument,  we show that $U$ is bounded below by a constant $U_\infty>-\infty$. Since $U$ is, up to integrable errors,  the logarithmic derivative of $I$, we can conclude $I$ cannot decay super-exponentially. The integrability of the coefficient in the right hand side of \eqref{err_est_intro} (and its square), as well as the $L^1$ in time integrability of the shrinker expression $\|\phi\|_{L^2(N^1_{\tau},d\tilde{\mu}_{\tau})}$ (owing to Schulze's Simon's Lojasiewicz inequality \cite{Schulze_cpt})  play a key role here (see Theorem \ref{L_with_errors} and Lemma \ref{non_shrinker_est}).    

Our frequency analysis of $u$ has similarities with the one performed by Bauldauf, Ho and Lee \cite{BHL} for solution of the heat equation with errors on self-shrinkers (which itself is based on Colding and Minicozzi's work \cite{CM21}). Note however that several aspects of our analysis are different: Apart from working in the rescaled picture from the get go, the flow we consider is not self-similar, the operator is different (even after scaling), and our frequency function is different. Importantly, in our setting, there is no term of the form $|T-t|^{-2\kappa}$ as in \cite{BHL}, so the sharp assumption \eqref{dist_ass} suffices.

\begin{remark}\label{why_not_lin_over_shrinker_both}
It is natural to try linearizing  both $N^1_{\tau}$ and $N^{2}_{\tau}$ over the static flow $\Sigma$, and consider the evolution of height difference over that fixed shrinker. Note however that in this case, the error term in \eqref{err_est_intro} would have a free $|\nabla^2u|$ term on the right hand side (with a time integrable coefficient). Such errors are not easily amenable for frequency analysis (see \cite{KS_simple,CM21,BHL}). A similar challenge is likely to appear if one tries to obtain an analogous result to Theorem \ref{main_theorem} for the Ricci flow, using the ideas of \cite{Ks}, or if one tries to use the two shrinker quantities on $N^1_{\tau}$ and $M^2_{\tau}$ to derive Theorem \ref{main_theorem} in a manner more similar to \cite{MartinHagemayerSesum}. 
\end{remark}

The organization of the paper is as follows: In Section \eqref{sec_err} we derive Theorem \ref{L_with_errors}, showing that solution $u$ to \eqref{Lin_with_err} with errors satisfying morally \eqref{err_est_intro}  over a RMCF converging to compact self shrinkers can not converge to zero super-exponentially. In Section \ref{main_thm_proof} we show how Theorem \ref{main_theorem} reduces to Theorem \ref{L_with_errors}. The Appendix derives the RMCF equation in normal graphical gauge over another RMCF, from which the estimate \eqref{Lin_with_err} and \eqref{err_est_intro} follow. 

\subsubsection*{Acknowledgements} We wish to thanks Brett Kotschwar and Brian White for useful discussions. This project has received funding from the European Research Council (ERC) under the European Union's Horizon 2020 research and innovation programme, grant agreement No. 101116390. JMDH was supported by the EPSRC through the grant EP/Y017862/1, \textit{Geometric Flows and the Dynamics of Phase Transitions}.


\section{Linearlized RMCF with errors}\label{sec_err}
The main theorem of this section is the following:
\begin{theorem}\label{L_with_errors}
Let $(N_t)_{\tau\in [0,\infty)}$ be a solution to RMCF such that there exists a compact smooth self shrinker $\Sigma$ for which $N_{\tau}\rightarrow \Sigma$ as $\tau \rightarrow \infty$, with multiplicity one. Let $u:N_{\tau}\rightarrow \mathbb{R}$ be a smooth function  satisfying 
\begin{equation}\label{err_est_ass}
|(\partial_{\tau}-L)u| \leq C(\tau)(|u|+|\nabla u|),
\end{equation}
where $L$ is given by \eqref{phi_ev_and_L_def} and  $C(\tau)\in L^2([0,\infty))\cap L^1([0,\infty))$. If for every $k\in \mathbb{N}$
\[
e^{k\tau}\|u(\tau)\|_{C^0(N_{\tau})}\rightarrow 0\qquad \textrm{as }\; \tau \rightarrow \infty,  
\]
then $u\equiv0$. 
\end{theorem}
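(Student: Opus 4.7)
I would execute the frequency-function scheme outlined in the introduction. Set
\[
I(\tau) := \int_{N_\tau} u^2\, d\tilde\mu_\tau, \qquad U(\tau) := \frac{1}{I(\tau)}\int_{N_\tau} u\, L u\, d\tilde\mu_\tau.
\]
Differentiating $I$ using the first variation $\frac{d}{d\tau} d\tilde\mu_\tau = -|\phi|^2\, d\tilde\mu_\tau$ of the Gaussian weighted area along RMCF, together with $\partial_\tau u = Lu + E$, yields
\[
(\log I)'(\tau) = 2\, U(\tau) + \mathcal{E}_1(\tau), \qquad \mathcal{E}_1(\tau) := \frac{1}{I(\tau)}\Big(2\int_{N_\tau} u E\, d\tilde\mu_\tau - \int_{N_\tau} u^2 |\phi|^2\, d\tilde\mu_\tau\Big).
\]
Using $|E| \leq C(\tau)(|u|+|\nabla u|)$, the Gaussian integration-by-parts identity $\int u L u\, d\tilde\mu = -\int |\nabla u|^2\, d\tilde\mu + \int (|A|^2 + \tfrac{1}{2}) u^2\, d\tilde\mu$, and a weighted Cauchy--Schwarz, $|\mathcal{E}_1(\tau)|$ is dominated by an $L^1(d\tau)$ function as long as $U$ remains a priori bounded. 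This uses $C \in L^1 \cap L^2$ and the integrability of $\|\phi\|_{L^2(d\tilde\mu_\tau)}$ (and, after a smooth parabolic bootstrap, of $\|\phi\|_{L^\infty}^2$) coming from Schulze's \L{}ojasiewicz--Simon inequality \cite{Schulze_cpt} (Lemma \ref{non_shrinker_est}).

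The crucial ingredient is an almost-monotonicity of $U$. Differentiating $\int u L u\, d\tilde\mu_\tau$ via formal self-adjointness of $L$ on $(N_\tau, d\tilde\mu_\tau)$, using $\partial_\tau u = Lu + E$ and the evolution of the coefficients of $L$ (the ambient drift and $|A|^2$) under RMCF, leads to
\[
U'(\tau) = \frac{2}{I(\tau)}\Big(\int_{N_\tau} (Lu)^2\, d\tilde\mu_\tau - U(\tau)^2\, I(\tau)\Big) + \mathcal{E}_2(\tau),
\]
whose first term is $\geq 0$ by Cauchy--Schwarz --- the classical Poon-type monotonicity contribution in the static shrinker case. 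A Young's inequality absorbs the $\int E\, Lu$-type contributions coming from $E$ into this non-negative gap, leaving residuals bounded by $C(\tau)^2 (1 + |U|)\, I$; the non-static corrections are bounded by $\|\phi\|_{L^\infty}^2 (1 + |U|)\, I$ and similar quantities, integrable in $\tau$ by Lemma \ref{non_shrinker_est}. Careful bookkeeping produces a linear-Gronwall inequality $U'(\tau) \geq -f(\tau) - g(\tau)|U(\tau)|$ with $f, g \in L^1([0, \infty))$. Since $U$ is uniformly bounded above --- the Rayleigh quotient of $L$ has upper bound governed by the uniformly bounded $\sup_{N_\tau}(|A|^2 + \tfrac{1}{2})$ --- Gronwall integration (starting from an initial time where $U(0)$ is finite, which is automatic whenever $I(0) > 0$) yields a uniform lower bound $U(\tau) \geq U_\infty > -\infty$ on $[0,\infty)$.

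Combining, $(\log I)'(\tau) \geq 2 U_\infty - \tilde h(\tau)$ for some $\tilde h \in L^1$; integrating gives $I(\tau) \geq c\, I(0)\, e^{2U_\infty \tau}$. On the other hand, the hypothesis forces $I(\tau) \leq e^{-k\tau}$ for every $k$ at large $\tau$; choosing $k > -2U_\infty$ and sending $\tau \to \infty$ forces $I(0) = 0$, i.e.\ $u(0, \cdot) \equiv 0$. Forward uniqueness for $\partial_\tau u = Lu + E(u)$, immediate from a Gronwall energy estimate using $|E(u)| \leq C(\tau)(|u| + |\nabla u|)$ with $C \in L^1$, then propagates the vanishing to every $\tau \geq 0$, so $u \equiv 0$. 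The main obstacle is the almost-monotonicity step: simultaneously handling the linear error $E$ and the non-static ambient geometry in the computation of $U'$ produces numerous cross-terms, and it is a careful exercise in Cauchy--Schwarz, Young's inequality, and Gaussian integration by parts to verify that they are all dominated by $f(\tau) + g(\tau)|U|$ with $f, g \in L^1(d\tau)$. This is precisely where the full strength of $C \in L^1 \cap L^2$ and of Schulze's decay rate for $\|\phi\|_{L^2(d\tilde\mu)}$ enters.
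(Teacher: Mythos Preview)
Your proposal follows the same frequency-function strategy as the paper and reaches the conclusion by the same mechanism; the endgame (deduce $I(0)=0$ and propagate by a forward energy/Gronwall argument) is a harmless variant of the paper's direct contradiction. One technical point needs sharpening, however. Absorbing the cross-term $\tfrac{4}{I}\int_{N_\tau} (Lu)\,E\,d\tilde\mu_\tau$ ``into the non-negative gap'' by a generic Young's inequality does \emph{not} yield the linear-in-$|U|$ differential inequality you state: any splitting that peels off a piece of $\int (Lu)^2$ produces a residual of order $U^2$ (since $\tfrac{4}{I}\int (Lu)^2 = P + U^2$, where $P\ge 0$ is the Cauchy--Schwarz gap), and the Gronwall would not close. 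The paper circumvents this by completing the square in the entire main term, rewriting it as
\[
\frac{4}{I}\int\Bigl(Lu+\tfrac{E}{2}\Bigr)^2 d\tilde\mu
-\frac{4}{I^2}\Bigl(\int u\Bigl(Lu+\tfrac{E}{2}\Bigr)d\tilde\mu\Bigr)^2
-\frac{1}{I}\int E^2\,d\tilde\mu
+\frac{1}{I^2}\Bigl(\int uE\,d\tilde\mu\Bigr)^2,
\]
so that Cauchy--Schwarz applied to the \emph{shifted} quantity $Lu+\tfrac{E}{2}$ makes the first two terms jointly non-negative, and the only surviving residual is $-\int E^2/I$, which is genuinely affine in $U$ via $\int|\nabla u|^2/I$. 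With this algebraic device in place (rather than a bare Young inequality), your Gronwall inequality $(\Lambda-U)'\le c(C^2+D)(\Lambda-U)$ follows exactly, and the rest of your outline is correct.
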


In order to establish Theorem \ref{L_with_errors} for $N_{\tau}$ which is not self-shrinking, we need to  control some derivatives of geometric quantities along the flow, in a time integrable fashion. This is done in the following two Lemmas, the first of which is  a recasting of \cite[Lemma 3.1]{Schulze_cpt}.
\begin{lemma}\label{phi_int}
Let $N_t$ be as in Theorem \ref{L_with_errors} and denote by $varphi$ the shrinker quantity 
\begin{equation}
\phi:= H+\frac{1}{2}\langle x, \nu \rangle,
\end{equation}
where $\nu$ is the inner pointing normal. Then 
\begin{equation}
\int_{0}^\infty \|\phi\|_{L^2(M_t)} <\infty.
\end{equation}
Here the $L^2$ norm is the Gaussian $L^2$ norm.
\end{lemma}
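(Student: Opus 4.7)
The plan is to follow the standard Łojasiewicz--Simon scheme for convergence of gradient flows, exactly as in \cite{Schulze_cpt}. First, introduce the Gaussian area (or $F$-functional at scale one)
\[
F(N) := \int_N \exp\left(-\frac{|x|^2}{4}\right) d\mu.
\]
A direct calculation (using the RMCF equation $\partial_\tau x = -\phi\,\nu$) gives
\[
\frac{d}{d\tau} F(N_\tau) = -\int_{N_\tau} \phi^2\, d\tilde\mu_\tau = -\|\phi\|_{L^2(N_\tau)}^2,
\]
so $f(\tau) := F(N_\tau) - F(\Sigma)$ is nonincreasing, nonnegative (by the first variation characterization of shrinkers as critical points of $F$), and $f(\tau)\to 0$ as $\tau\to\infty$ by the multiplicity-one smooth convergence $N_\tau\to\Sigma$.

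Next, I would invoke Schulze's Łojasiewicz--Simon inequality for compact self-shrinkers (the analogue of Simon's inequality \cite{Simon} adapted to the $F$-functional, which is precisely the content of \cite[Lemma 3.1]{Schulze_cpt}). Since $\Sigma$ is a compact smooth shrinker and $N_\tau$ is $C^{2,\alpha}$-close to $\Sigma$ for all $\tau$ large, there exist $C>0$, $\theta \in (0,1/2]$, and $\tau_0$ such that for all $\tau\geq \tau_0$,
\[
\bigl(F(N_\tau) - F(\Sigma)\bigr)^{1-\theta} \leq C\,\|\phi\|_{L^2(N_\tau)} = C\sqrt{-f'(\tau)}.
\]
Note that $\phi$ is essentially the $L^2$-gradient of $F$ with respect to the Gaussian measure, so this is a genuine Łojasiewicz inequality at the critical point $\Sigma$.

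Combining the two displays yields the key ODE-type inequality
\[
-f'(\tau) = \|\phi\|_{L^2}^2 \geq C^{-1}\|\phi\|_{L^2}\, f(\tau)^{1-\theta},
\qquad \textrm{i.e.}\qquad
\|\phi\|_{L^2(N_\tau)} \leq -C\, f(\tau)^{\theta-1} f'(\tau) = -\frac{C}{\theta}\frac{d}{d\tau}\bigl(f(\tau)^\theta\bigr).
\]
Integrating from $\tau_0$ to $\infty$ and using $f(\tau)\to 0$,
\[
\int_{\tau_0}^\infty \|\phi\|_{L^2(N_\tau)}\, d\tau \leq \frac{C}{\theta}\, f(\tau_0)^\theta < \infty,
\]
while the integral on $[0,\tau_0]$ is finite by smoothness and compactness of the flow. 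The only substantive obstacle is the invocation of the Łojasiewicz--Simon inequality in the correct $C^{2,\alpha}$ regularity neighborhood of $\Sigma$; this is where the hypotheses (compact, smooth, multiplicity one) are essential, and the citation to \cite{Schulze_cpt} handles it. Everything else is bookkeeping of the gradient-flow-with-Łojasiewicz argument.
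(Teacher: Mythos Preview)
Your proof is correct and follows essentially the same \L ojasiewicz--Simon gradient-flow argument as the paper's own proof. One minor quibble: the nonnegativity of $f(\tau)=F(N_\tau)-F(\Sigma)$ does not follow from $\Sigma$ being a \emph{critical} point of $F$, but rather from the two facts you already state, namely that $f$ is nonincreasing and $f(\tau)\to 0$.
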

\begin{proof}
Recall the $F$ functional 
\[
F[M]=\frac{1}{(4\pi)^{n/2}}\int_{M} e^{-|x|^2/4},
\]
has $\phi$ as its gradient, and so Simon's Lojasiewicz inequality \cite{Simon,Schulze_cpt}, and the convergence of $N_{\tau}$ to $\Sigma$ imply that there exists some $\theta\in (0,1)$ and $\tau_0$ such that for every $\tau \geq \tau_0$  
\[
(F[N_{\tau}]-F[\Sigma])^{1-\theta} \leq \left(\int_{N_{\tau}}|\phi|^2d\tilde{\mu}_{\tau}\right)^{1/2},
\]
where we recall that $d\tilde{\mu}_{\tau}$ is the Gaussian measure given by \eqref{Gauss_measure_def}.
Since rescaled MCF is the gradient flow of $F$, we have, for every $\tau \geq \tau_0$
\begin{align}
\frac{d}{d{\tau}}(F[N_{\tau}]-F[\Sigma]) &= -\left(\int_{N_{\tau}}|\phi|^2d\tilde{\mu}_{\tau}\right)^{1/2}\left(\int_{N_{\tau}}|\phi|^2d\tilde{\mu}_{\tau}\right)^{1/2}\nonumber\\
& \leq  -(F[N_{\tau}]-F[\Sigma])^{1-\theta} \left(\int_{N_{\tau}}|\phi|^2d\tilde{\mu}_{\tau}\right)^{1/2},
\end{align}
so 
\[
\frac{1}{\theta} \frac{d}{d\tau}\left(\left( F[N_{\tau}]-F[\Sigma] \right)^{\theta}\right) \leq  -\left(\int_{N_{\tau}}|\phi|^2d\tilde{\mu}_{\tau}\right)^{1/2}.
\]
Integrating this we obtain, for every $\tau_1\geq \tau_0$
\[
\int_{\tau_0}^{\tau_1} \left(\int_{N_{\tau}}|\phi|^2d{\tilde{\mu}}_{\tau}\right)^{1/2} \leq \frac{1}{\theta} (F[N_{\tau_0}]-F[\Sigma])^\theta. 
\]
The claim readily follows
\end{proof}

\begin{lemma}\label{non_shrinker_est}
Let $N_{\tau}$ be as in Theorem \ref{L_with_errors}. Then
\begin{equation}\label{higher_order_phi}
\int_{0}^\infty \|\phi\|_{C^2(N_{\tau})}d\tau < \infty. 
\end{equation}
In particular, there exists a continuous function $D:[0,\infty)\rightarrow \mathbb{R}_{+}$ with $\int_{0}^{\infty} D(\tau)d\tau<\infty$ such that
\begin{equation}
\|g'(\tau)\|_{g(\tau)}+ \left|\frac{d}{d\tau}|A|^2\right|+ {|\phi(\tau)|}+ \left|\frac{d}{d\tau}\phi\right| \leq D(\tau),
\end{equation} 
where $A$ is the second fundamental form of $N_{\tau}$ and $g(\tau)$ is the induced metric on $N_{\tau}$.
\end{lemma}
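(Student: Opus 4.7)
The plan is to first establish \eqref{higher_order_phi}, and then to bound each of the pointwise quantities on the left-hand side by a uniform multiple of $\|\phi\|_{C^2}$, reducing the problem to a single $L^1$-in-time estimate on $\|\phi\|_{C^2}$.

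To establish \eqref{higher_order_phi}, I would exploit the hypothesized smooth convergence $N_\tau \to \Sigma$ with multiplicity one. For $\tau \geq \tau_0$ large, this allows one to represent $N_\tau$ as a normal graph over the compact smooth shrinker $\Sigma$, with graph function $v_\tau$ decaying to zero in every $C^k$-norm. Pulling back to $\Sigma$ via the graph map, the function $\phi$ satisfies a linear parabolic equation of the form $\partial_\tau \phi = L_\tau \phi$ on the fixed compact manifold $\Sigma$, whose coefficients are smooth and uniformly bounded in every $C^k$-norm for $\tau \geq \tau_0$, with uniformly elliptic principal symbol.

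Standard parabolic regularity for such linear equations on a compact manifold (iterated $L^2$ energy estimates producing $H^k$ bounds, followed by Sobolev embedding) then yields the instant smoothing estimate
\[
\|\phi(\tau)\|_{C^2(N_\tau)} \leq C \, \|\phi(\tau-1)\|_{L^2(N_{\tau-1},\, d\tilde{\mu}_{\tau-1})}, \qquad \tau \geq \tau_0 + 1,
\]
with $C$ independent of $\tau$. Integrating over $\tau \in [\tau_0+1, \infty)$ and invoking Lemma \ref{phi_int} gives
\[
\int_{\tau_0+1}^\infty \|\phi(\tau)\|_{C^2}\, d\tau \leq C \int_{\tau_0}^\infty \|\phi(s)\|_{L^2}\, ds < \infty,
\]
which combined with boundedness of $\|\phi\|_{C^2}$ on the compact initial interval $[0, \tau_0+1]$ yields \eqref{higher_order_phi}.

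For the pointwise bound, standard RMCF computations give $\partial_\tau g_{ij} = -2\phi h_{ij}$ and an evolution formula for $h_{ij}$ containing $\nabla_i\nabla_j \phi$ together with terms quadratic in $A$ and $\phi$. Using the uniform bound on $|A|$ from smooth convergence, both $\|g'(\tau)\|_{g(\tau)}$ and $|\tfrac{d}{d\tau}|A|^2|$ are dominated by $C \|\phi\|_{C^2}$; trivially $|\phi| \leq \|\phi\|_{C^2}$, and $|\partial_\tau \phi| = |L\phi| \leq C \|\phi\|_{C^2}$. Defining $D(\tau)$ as a suitable constant multiple of $\|\phi(\tau)\|_{C^2}$ (modified continuously on $[0, \tau_0+1]$ to absorb the bounded initial contribution) produces a continuous function $D$ with $\int_0^\infty D\, d\tau<\infty$ that dominates the left-hand side.

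I expect the main technical point to be justifying the instant smoothing estimate uniformly in $\tau$. This is where the smooth convergence $N_\tau \to \Sigma$ is essential: it is precisely what ensures the coefficients of the pulled-back operator on $\Sigma$ are uniformly smooth and uniformly bounded, so that the parabolic regularity constants do not degenerate as $\tau \to \infty$.
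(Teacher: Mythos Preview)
Your proposal is correct and follows the same overall strategy as the paper: upgrade the $L^1$-in-time control on $\|\phi\|_{L^2}$ from Lemma~\ref{phi_int} to $C^2$ via parabolic regularity for the linear equation $\partial_\tau\phi = L\phi$, then read off the ``in particular'' part from the evolution equations $g'=-2\phi A$ and $\partial_\tau h_{ij}=\nabla_i\nabla_j\phi+\cdots$.

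The organizational difference is that you invoke parabolic smoothing directly in the form $\|\phi(\tau)\|_{C^2}\leq C\|\phi(\tau-1)\|_{L^2}$, which already packages together the forward $L^2$ growth bound and the interior estimate. The paper instead separates these two ingredients: it first proves $f'(\tau)\leq Cf(\tau)$ for $f=\|\phi\|_{L^2}^2$ explicitly, uses this to pass from $\int\|\phi\|_{L^2}\,d\tau<\infty$ to the discrete bound $\sum_i\sup_{[i,i+2]}\|\phi\|_{L^2}<\infty$, and only then applies interior regularity in the space--time $L^2$ form $\|\phi\|_{C^2}\leq C\bigl(\int_\tau^{\tau+1}\|\phi\|_{L^2}^2\bigr)^{1/2}$. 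Your packaging is more direct and avoids the discrete-sum detour; the paper's version makes the forward-growth ingredient (which is what actually uses the structure of $L$) more visible. Both are valid, and the uniform-in-$\tau$ constants in either version rest on exactly the point you flag: the smooth convergence $N_\tau\to\Sigma$ keeps the coefficients of the pulled-back operator uniformly controlled.
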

\begin{proof}

Note that as $\phi$ satisfies the parabolic equation 
\begin{equation}\label{phi_eq}
\partial_{\tau}\phi=L\phi,
\end{equation}
setting $f(\tau):=\|\phi\|^2_{L^2(N_{\tau},d\tilde{\mu}_{\tau})}$ we have 
\begin{equation}
f'(\tau) = 2\int_{N_{\tau}} \phi L\phi {- \frac{1}{2}\phi^4}d\tilde{\mu}_{\tau}  = -2\int_{N_{\tau}} |\nabla \phi|^2+2\int (|A|^2+\frac{1}{2}-\frac{1}{2}\phi^2)\phi^2d\tilde{\mu}_{\tau} \leq Cf(\tau),
\end{equation}
where $C=3(\sup_{\Sigma}|A|^2+1)$, and the last inequality holds for $\tau$ large enough by the convergence of $N_{\tau}$ to $\Sigma$.  Thus for every $\tau_2>\tau_1$ (provided $\tau_1$ is large enough)
\begin{equation}\label{bdd_growth}
f(\tau_2) \leq e^{C(\tau_2-\tau_1)}f(\tau_1).
\end{equation}

We now claim that \eqref{bdd_growth} and Lemma \ref{phi_int} imply that
\begin{equation}\label{sum_i}
\sum_{i=0}^{\infty}\sup_{\tau\in [i,i+2]}\|\phi\|_{L^2(N_{\tau})}<\infty.
\end{equation}
Indeed, if \eqref{sum_i} were not to hold, we could assume w.l.o.g. that 
\begin{equation}\label{sum_i4}
\sum_{i=0}^{\infty}\sup_{\tau\in [4i,4i+2]}\|\phi\|_{L^2(N_\tau)}=\infty.
\end{equation}
{
In light of \eqref{bdd_growth}, we obtain 
\begin{align*}
    e^{-4C}\sup_{\tau\in[4i,4i+2]} f(\tau)\leq \inf_{\tau \in[4i-2,4i]}f(\tau),
\end{align*}
for every $i$, from which we conclude
\begin{equation}
\sum_{i=1}^{\infty}\int_{4i-2}^{4i}\|\phi\|_{L^2(N_{\tau})} \geq e^{-2C}\sum_{i=0}^{\infty}\sup_{\tau\in [4i,4i+2]}\|\phi\|_{L^2(N_\tau)},
\end{equation}}

contradicting Lemma \ref{phi_int}.

\bigskip

Now \eqref{sum_i} clearly implies that, 
\begin{equation}\label{L^2_in_time_too}
\int_{0}^{\infty}\left(\int_{0}^{1}\int_{N_{\tau+\sigma}}\|\phi\|^2_{L^2(N_{\tau+\sigma})}\right)^{1/2} \leq \int_{0}^{\infty}\sup_{\sigma\in [\tau,\tau+1]} \|\phi\|_{L^2(N_\sigma)}<\infty. 
\end{equation} 
The fact that $\phi$ satisfies the parabolic equation \eqref{phi_eq}, whose corfficients converge to the coefficient of $L$ on the shrinker $\Sigma$, allows one to use standard parabolic PDE theory to obtain the existence of $C<\infty$ such that for every $\tau$ 
\[
\|\phi\|_{C^2\left(N_{\tau}\times \left[\tau+\frac{1}{2},\tau+1\right]\right)} \leq C\left(\int_{0}^{1}\int_{N_{\tau+\sigma}}\|\phi\|^2_{L^2(N_{\tau+\sigma})}\right)^{1/2}. 
\] 
Combined with \eqref{L^2_in_time_too}, this  yields  \eqref{higher_order_phi}.

\bigskip

Finally, note that standard derivation of the evolution equation \cite[Lemma 2.1, Lemma 2.4]{MartinHagemayerSesum} read
\begin{equation}
g'(\tau)=-2\phi A,\qquad \frac{d}{d\tau}|A|^2=2\langle A, \nabla^2 \phi \rangle +2\phi \mathrm{tr}(A^3),
\end{equation} 
which, combined with the already stated  \eqref{phi_eq}, \eqref{higher_order_phi} and the smooth convergence to the compact self shrinker, yields the moreover part of the lemma.

\end{proof}

\begin{proof}[Proof of Theorem \ref{L_with_errors}]
Setting 
\[
E(u):= \left(\partial_\tau-L\right)u,
\]
we have by assumption that
        \begin{align*}
        |E(u)|\leq C(\tau)\left(|u|+|\nabla u|\right).
        \end{align*}
        We recall the following definitions
        \begin{align*}
            I(\tau) &= \int_{N_{\tau}} u^2 d\tilde{\mu}_{\tau}\\
            U(\tau) &= \frac{2\int_{N_\tau} uLu d\tilde{\mu}_{\tau}}{I}= \frac{2\int_{N_\tau} uLu d\tilde{\mu}_{\tau}}{\int_{N_\tau} u^2 d\tilde{\mu}_\tau}=\frac{2\int_{N_\tau} -\norm{\nabla u}^2+\left(\norm{A}^2+\frac1{2}\right)u^2 \ d\tilde{\mu}_\tau}{I}.
        \end{align*}
Note that $U(\tau)$ is a Rayleigh quotient corresponding to the operator $L$. Thus, by the continuity of the first eigenvalue and as the $N_{\tau}$ converge to $\Sigma$, we can find some $\Lambda<\infty$ such that $U(\tau)<\Lambda$ for every $\tau$. Our main goal is to show that $U$ is bounded from below as well.  We calculate
        \begin{align}
            \ddt I(\tau) &= 2\int_{N_\tau} u (L u+E(u))-\phi^2 u^2d\tilde{\mu}_{\tau}\nonumber\\
            \ddt \log(I(\tau)) &= \frac{2\int_{N_\tau} u (L u+E(u))-\phi^2u^2{d}\tilde{\mu}_{\tau}}{\int_{N_\tau} u^2 d\tilde{\mu}_{\tau}}\label{log_der}\\
            &\geq  U(\tau)-\frac{2C(\tau)\int_{N_\tau} u(|u|+|\nabla u|) d\tilde{\mu}_{\tau}}{\int_{N_\tau} u^2 d\tilde{\mu}_{\tau}}-D(\tau) \nonumber\\
            &\geq U(\tau)-3(C(\tau)+D(\tau)) -C(\tau)\frac{\int_{N_\tau} |\nabla u|^2 d\tilde{\mu}_{\tau}}{\int_{N_\tau} u^2 d\tilde{\mu}_{\tau}}\nonumber\\
            &\geq \left( 1+\frac{C(\tau)}{2} \right) U(\tau)-(3+\cA)(C(\tau)+D(\tau)) \nonumber
        \end{align}
        where $\cA=2\max_{\Sigma}\{|A|^2+\frac1{2}\}$, 
where $D$ is the function from Lemma \ref{non_shrinker_est}. We can now  calculate
\begin{equation}\label{der_def}
U'(\tau)=V_{\textrm{main}}(\tau)+V_{\textrm{err}}(\tau),
\end{equation}
where 
\begin{equation}
V_{\textrm{main}}(\tau):=\frac{2\int_{N_\tau} \partial_\tau u Lu+u L\partial_\tau u d\tilde{\mu}_{\tau}}{I}-\frac{4\int_{N_{\tau}} \left( uLu+uE(u) \right)\int_{N_{\tau}} uLu d\tilde{\mu}_{\tau}}{I^2}
\end{equation}
is the term that would appear if the flow $N_{\tau}$ were the static self shrinker $\Sigma$, and
\begin{align*}
V_{\textrm{err}}(\tau)&=-\frac{2\int_{N_\tau} \left(-\norm{\nabla u}^2+\left(\norm{A}^2+\frac1{2}\right)u^2\right)\phi^2+2g'(\nabla u ,\nabla u)-\frac{d|A|^2}{dt}u^2 d\tilde{\mu}_\tau}{I}\\
&+\frac{2\int_{N_\tau}\phi^2u^2d\tilde{\mu}_\tau\int_{N_\tau}uLu d\tilde{\mu}_\tau}{I^2}. 
\end{align*}
is the error term coming from $N_{\tau}$ not being static. Note that by Lemma \ref{non_shrinker_est} we can safely estimate
\begin{equation}\label{err_est}
V_{\textrm{err}}(\tau) \geq c(\mathcal{A})D(\tau)(U(\tau)-\Lambda),
\end{equation}
for some constant $c$ depending on $\mathcal{A}$.
Using the self adjointness of $L$, we can also estimate
        \begin{align*}
           V_{\textrm{main}}(\tau) &=\frac{4\int_{N_\tau}  (Lu)^2+ Lu E(u) d\tilde{\mu}_\tau}{I}-\frac{4\int_{N_\tau} \left( uLu+uE(u) \right)\int_{N_\tau} uLu d\tilde{\mu}_\tau}{I^2}\\
           &=4\left(\frac{I\int_{N_\tau}  \left(Lu+\frac{E(u)}{2}\right)^2 -\left(\frac{E(u)}{2}\right)^2 d\tilde{\mu}_\tau}{I^2}\right)\\
           &-4\left(\frac{\int_{N_\tau} u\left( Lu+\frac{E(u)}{2} \right)+u\frac{E(u)}{2}d\tilde{\mu}_\tau\int_{N_\tau} u\left(Lu+\frac{E(u)}{2}\right)-u\frac{E(u)}{2} d\tilde{\mu}_\tau}{I^2}\right)\\
           &=4\left(\frac{I\int_{N_\tau}  \left(Lu+\frac{E(u)}{2}\right)^2 -\left(\frac{E(u)}{2}\right)^2 d\tilde{\mu}_\tau}{I^2}-\frac{\left(\int_{N_\tau} u\left( Lu+\frac{E(u)}{2} \right)d\tilde{\mu_\tau}\right)^2 -\left(\int_{N_t} u\frac{E(u)}{2}\right)^2 d\tilde{\mu}_\tau}{I^2}\right)\\
           &\geq-\frac{\int_{N_\tau} E(u)^2 d\tilde{\mu}_\tau}{I} \geq -2C^2(\tau)\frac{\int_{N_\tau} (|u|^2+|\nabla u|^2) d\tilde{\mu}_\tau}{I}\geq 2\left(U-(1+\cA)\right)C^2(\tau),
        \end{align*}        
where the first inequality follows from Cauchy-Schwartz and the second follow from \eqref{err_est_ass}. Combining this with \eqref{der_def} and \eqref{err_est} we obtain that there exists some constant $c=c(\mathcal{A})$ such that
\begin{equation}
(\Lambda-U)'(\tau) \leq c(\Lambda-U)(D(\tau)+C^2(\tau)),
\end{equation}
from which we deduce
            \begin{align*}
                \frac{d}{d\tau} \log(\Lambda-U)\leq c(C^2(\tau)+D(\tau)).
            \end{align*}

We integrate this formula over time to yield
        \begin{align*}
            \log\left(\frac{\Lambda-U(\tau)}{\Lambda-U(0)}\right)&\leq c\int_{0}^{\tau} (C^2(\sigma)+D(\sigma))d\sigma\\,
        \end{align*}
which after exponentiating both sides an rearranging yields
        \begin{align*}
            U(\tau)&\geq \Lambda + (U(0)-\Lambda)\exp\left(c\int_{0}^{\tau}(C^2(\sigma)+D(\sigma))d\sigma\right).
        \end{align*}
By the time integrability of $C^2$ and $D$, this implies that there exists some $U_{\infty}>-\infty$ such that
\begin{equation}
U(\tau)\geq U_{\infty},\qquad \tau \in [0,\infty).
\end{equation}
By \eqref{log_der} and as $C,D\in L^1([0,\infty)])$ we get that there exists some positive  $\lambda<\infty$ such  
\begin{equation}
\log(I)(\tau) \geq -\lambda \tau - \lambda.
\end{equation}

This contradicts the assumption that $u$ converges to zero faster than any exponential.
\end{proof}

\section{proof of the main theorem}\label{main_thm_proof}
In this section we will show how Theorem \ref{main_theorem} can be reduced to Theorem \ref{L_with_errors}
\begin{proof}[Proof of Theorem \ref{main_theorem}]
Consider the rescaled mean curvature flows $N^i_{\tau}$ obtained from $M^i_t$  by
\begin{equation}
N^i_{\tau}=e^{\tau/2}\left(M^{i}_{T-e^{-\tau}}-x_0\right).
\end{equation} 
By assumption there exist some compact self shrinkers $\Sigma^i$ ($i=1,2$) such that $N^i_\tau\rightarrow \Sigma^i$ in $C^{\infty}$ as $\tau \rightarrow \infty$. 
Note that the assumption \eqref{dist_ass} rescales to the fact that for every $k\in \mathbb{N}$
\begin{equation}\label{exp_conv_Hauss}
e^{k\tau}d_H(N^1_\tau,N^2_\tau)\rightarrow 0.
\end{equation}
In particular we have that $d_H(N^1_\tau,N^2_\tau)\rightarrow 0$, which in turn implies that \newline $\Sigma^1=\Sigma^2:=\Sigma$.

\bigskip

Since both $N^i_\tau$ converge smoothly to $\Sigma$, for all $\tau\geq \tau_0$ the hypersurface $N^2_\tau$ can be written as a normal graph of a smooth function $u$ over $N_\tau:=N^1_\tau$ satisfying $u\rightarrow 0$ in $C^{\infty}$ as $\tau \rightarrow \infty$. Observe that \eqref{exp_conv_Hauss} implies that for every $k\in \mathbb{N}$
\begin{equation}\label{u_c_0_conv}
e^{k\tau}\sup_{x\in N_\tau}|u(x,\tau)|\rightarrow 0.
\end{equation}
\bigskip

Now, using Lemma \ref{app_lin_RMCF}, the height function  $u$ of $N^2_{\tau}$ over $N_\tau=N^1_\tau$ satisfies a parabolic quasilinear  equation of the form 
\begin{equation} 
u_{\tau}=Lu+\varepsilon_1(x,u,\nabla u)\mathrm{div}(\mathcal{E} (\nabla u))+\varepsilon_2(x,u,\nabla u),
\end{equation}\label{par_eq_main}
where $Lu=\Delta u -\frac{1}{2}\langle x, \nabla u \rangle+\left(|A|^2+\frac{1}{2}\right)u$, and where $\mathcal{E}:TN_{\tau}\rightarrow TN_{\tau}$ and $\varepsilon^i:N_{\tau}\rightarrow \mathbb{R}$ satisfy the pointwise estimates
{\begin{equation}\label{E_est_main}
    \|\partial_\tau^j\nabla^i \mathcal{E}\|+\|\partial_\tau^j\nabla^i \varepsilon_1\| \leq C \sum_{l=0}^{1+i+2j} |\nabla^l u(x)|, \qquad \|\partial_\tau^j\nabla^i \varepsilon_2\| \leq C \sum_{l=0}^{1+i+2j} |\nabla^l u(x)|^2,
\end{equation} }   
for some universal $C$ and for every $i,j\geq 0 $ with $i+j\leq 1$, and for every $t\geq t_0$.

\bigskip

Since $u$ approaches $0$ in $C^{\infty}$, it follows from standard quasilinear parabolic theory that there exists some $C<\infty$ such that for every $\tau \geq \tau_0$  and $i=1,2$
\begin{equation}
\sup_{x\in N_\tau}|\nabla^i u(x,\tau)| \leq C\sup_{\sigma \in [\tau-1,\tau]}\sup_{x\in N_\sigma}|u(x,\sigma)|.
\end{equation}
Combined with \eqref{u_c_0_conv} (with $k=1$) this gives that
\begin{equation}
e^{\tau}\|u(x,\tau)\|_{C^2(N_\tau)}\rightarrow 0.
\end{equation}
By \eqref{par_eq_main} and \eqref{E_est_main}, applied with $j=0$ and $i=0,1$ we therefore get that there exists some $C<\infty$ such that 
\begin{equation}
|u_\tau-Lu| \leq Ce^{-\tau}(|u(x)|+|\nabla u(x)|).
\end{equation}
Thus, $u$ satisfies the assumption of Theorem \ref{L_with_errors}, and so $u=0$ for $\tau \geq \tau_0$. This implies that $N^1_\tau=N^2_\tau$ for $\tau\geq \tau_0$. 

\bigskip 

Rescaling back to the original MCF, we get that there exists some $\varepsilon>0$ such that $M^1_t=M^2_t$ for $t\in [T-\varepsilon,T)$. Since smooth MCF supports backwards uniqueness \cite{KS_simple,BU}, the Theorem follows.
\end{proof}

\appendix 

\section{The normal graphical gauge of rescaled  mean curvature flow over another rescaled mean curvature flow}\label{app}

We establish the following result, which is probably standard, providing the equation satisfied by the graph function of rescaled MCF over another rescaled mean curvature flow. This is the RMCF analogue of a similar result for minimal surface \cite[Lemma 2.26]{CM_minimal} and for MCF \cite[Lemma 4.19]{Her_reif}.

\begin{lemma}\label{app_lin_RMCF}
Let $(N_{\tau})_{\tau\in [\tau_0,\infty)}$  and $(N^2_\tau)_{\tau \in [\tau_0,\infty)}$ be two rescaled mean curvature flow such that $N_\tau$ is of bounded extrinsic geometry, and such that $N^2_\tau$ is a normal graph of a function $u$ over $N_\tau$ with small $C^5$ norm. Then
\begin{equation}\label{par_eq}
u_{\tau}=Lu+(1+\varepsilon_1(x,u,\nabla u))\mathrm{div}(\mathcal{E} (\nabla u))+\varepsilon_1(x,u,\nabla u)(\Delta u+\mathrm{div}(\mathcal{E} (\nabla u)))+\varepsilon_2(x,u,\nabla u),
\end{equation}
where $Lu=\Delta u -\frac{1}{2}\langle x, \nabla u \rangle+\left(|A|^2+\frac{1}{2}\right)u$, and where $\mathcal{E}:TN_{\tau}\rightarrow TN_{\tau}$ and $\varepsilon_i:N_{\tau}\rightarrow \mathbb{R}$ satisfy the pointwise estimate
\begin{equation}\label{E_est}
\|\partial_\tau^j\nabla^i \mathcal{E}\|+\|\partial_\tau^j\nabla^i \varepsilon_1\| \leq C \sum_{l=0}^{1+i+2j} |\nabla^l u(x)|, \qquad \|\partial_\tau^j\nabla^i \varepsilon_2\| \leq C \sum_{l=0}^{1+i+2j} |\nabla^l u(x)|^2
\end{equation}    
for some universal $C$ and for every $i,j\geq 0$ with $i+j\leq 1$.
\end{lemma}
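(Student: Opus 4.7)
My plan is to derive the equation by direct computation along the lines of \cite[Lemma 2.26]{CM_minimal} and \cite[Lemma 4.19]{Her_reif}, adapted to the rescaled setting to account for the extra drift term $\tfrac{1}{2}\langle x,\nu\rangle$ in $\phi$. I would parametrize $N_\tau$ by $X(p,\tau)$ in the normal gauge $\partial_\tau X = \phi\,\nu$, where $\phi = H + \tfrac{1}{2}\langle X,\nu\rangle$. Since $u$ is assumed small in $C^5$, the tubular neighborhood theorem makes $Y(p,\tau) := X(p,\tau) + u(p,\tau)\nu(p,\tau)$ a valid parametrization of $N^2_\tau$. The RMCF equation for $N^2_\tau$ then says that the $\nu^Y$-component of $\partial_\tau Y$ equals $\phi^Y := H^Y + \tfrac{1}{2}\langle Y,\nu^Y\rangle$.

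\textbf{Static graph expansion.} Freezing $\tau$ and viewing $N^2_\tau$ as a normal graph over $N_\tau$, I would expand the induced metric $g^Y = g - 2uA + u^2 A^2 + du\otimes du$, the unit normal $\nu^Y$ (a rescaled version of $\nu - \nabla u$), and the resulting mean curvature. The classical computation yields
\begin{equation*}
H^Y = H - (\Delta u + |A|^2 u) - \mathrm{div}(\mathcal{E}_0(\nabla u)) - q_0,
\end{equation*}
where $\mathcal{E}_0$ is linear in $(u,\nabla u)$ -- absorbing the $\nabla^2 u$ remainder inside the divergence, using that mean curvature is the variation of the area functional, so its nonlinear part is of divergence form -- and $q_0$ is quadratic in $(u,\nabla u)$ with no second derivatives. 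An analogous expansion of the position inner product gives $\tfrac{1}{2}\langle Y,\nu^Y\rangle = \tfrac{1}{2}\langle X,\nu\rangle + \tfrac{1}{2}u - \tfrac{1}{2}\langle X,\nabla u\rangle + \tilde q_0(u,\nabla u)$ with $\tilde q_0$ quadratic. Adding these assembles $\phi^Y = \phi + Lu + \mathrm{div}(\mathcal{E}(\nabla u)) + q(u,\nabla u)$ with $q$ quadratic.

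\textbf{Time derivative and matching.} Next I would expand $\partial_\tau Y = \phi\,\nu + u_\tau\,\nu + u\,\partial_\tau \nu$, using the standard identity $\partial_\tau \nu = -\nabla \phi$ (tangent to $N_\tau$ in normal gauge). Projecting onto $\nu^Y = W^{-1}(\nu - \nabla u) + O(|u|(|u|+|\nabla u|))$ and equating to the expression for $\phi^Y$ above, then solving for $u_\tau$, yields precisely \eqref{par_eq}: the factor $W^{-1}$ and its expansion produce the $(1+\varepsilon_1)$ and $\varepsilon_1$ coefficients, while all quadratic remainders -- including the $u\,\langle\nabla\phi,\nabla u\rangle$ contribution, whose coefficients are bounded thanks to the bounded extrinsic geometry of $N_\tau$ -- collect into $\varepsilon_2$.

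\textbf{Estimates and main obstacle.} The pointwise bounds in \eqref{E_est} for $i=j=0$ are read off the explicit expressions; the $i=1,\,j=0$ estimates follow by spatial differentiation; and the $i=0,\,j=1$ estimates follow by differentiating \eqref{par_eq} once in $\tau$ and using the equation itself to trade one time derivative for two spatial derivatives, which yields the parabolic weight $l \leq 1+i+2j$. The main technical obstacle is the bookkeeping in the static expansion: one must be careful to check that every term containing $\nabla^2 u$ is genuinely of divergence form, so that it belongs to $\mathrm{div}(\mathcal{E}(\nabla u))$ rather than to the $|u|^2+|\nabla u|^2$-controlled error $\varepsilon_2$. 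This structural feature is exactly what allows the integration-by-parts that underlies the frequency analysis in Section \ref{sec_err}.
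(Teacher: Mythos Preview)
Your proposal is correct and follows essentially the same route as the paper: the paper simply quotes \cite[Eq.~4.22, 4.29]{Her_reif} for the static graph expansions of $\nu_2$ and $\langle\vec{H}_2,\nu\rangle$, appends the drift term $\tfrac{1}{2}\langle x+u\nu,\nu_2\rangle\langle\nu_2,\nu\rangle$ by the direct computation you sketch, and then defers the time-derivative matching back to \cite[Eq.~4.30--4.33]{Her_reif}. The only cosmetic difference is that the paper projects the RMCF velocity of $N^2_\tau$ onto $\nu$ rather than onto $\nu^Y$, but after dividing by $\langle\nu,\nu^Y\rangle=1+O(|u|+|\nabla u|)$ the two computations coincide.
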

\begin{proof}
Using the calculation and notations of \cite[Lemma 4.19]{Her_reif}, denoting by $\nu$ the normal to $N_{\tau}$, \cite[Eq. 4.29]{Her_reif} \footnote{and renaming $L$ therein by $\mathcal{E}$}  reads
\begin{equation}\label{exp_H}
\langle \vec{H}_2, \nu \rangle = (1+\varepsilon)\mathrm{div}((I+\mathcal{E})\nabla u)+H+u|A|^2+Q_{ij}A_{ij}, 
\end{equation}
where all the geometric quantities on the RHS are these of $N_\tau$. Additionally, by  \cite[Eq. 4.22]{Her_reif}, letting $\nu_2$  the normal to $N^2_{\tau}$
\begin{equation}
\nu_2=-u_iE_i+\nu+ Q_2(x,u(x),\nabla u(x)),
\end{equation}
where $Q_2$ is some analytic expression satisfying quadratic bounds in $u$ and $\nabla u(x)$.
Thus
\begin{equation}
\langle x+u(x)\nu(x), \nu_2(x) \rangle \langle \nu_2(x) ,\nu(x) \rangle = \langle x,\nu \rangle -\langle x, \nabla u \rangle+u+Q_2(x,u(x),\nabla u(x)),
\end{equation} 
where $Q_2$ was updated to a different expression satisfying the same qualitative quadratic bounds. Combined with \eqref{exp_H} we get that
\begin{align*}
&\left\langle \vec{H}_2+\frac{1}{2}\langle x+u(x)\nu(x), \nu_2(x) \rangle, \nu \right\rangle \\
&\qquad= H+\frac{1}{2}\langle x,\nu \rangle + \Delta u -\frac{1}{2}\langle x,\nu \rangle +\left(|A|^2+\frac{1}{2}\right)u \\
& \qquad+ (1+\varepsilon(x,u,\nabla u))\mathrm{div}(\mathcal{E} (\nabla u))+\varepsilon(x,u,\nabla u)(\Delta u+\mathrm{div}(\mathcal{E} (\nabla u)))\\
&\qquad+Q_2(x,u(x),\nabla u(x)).
\end{align*}
Note that the error terms appearing in the last two lines have the form that is asserted by the Lemma. Continuing as in \cite[Eq. 4.30-4.23]{Her_reif} yields the desired result. 
\end{proof}

\bibliography{epi}
\bibliographystyle{alpha}

\end{document}